\documentclass[12pt]{amsart} 
\usepackage{amsfonts,amsmath,amssymb,amsthm} 
\usepackage{graphicx} 
 \newtheorem{theorem}{Theorem}

\newtheorem{Exe}{Example} 
 
\newtheorem{remark}{Remark}

\setlength{\textwidth}{450pt} 
\setlength{\hoffset}{-60pt} 
 
\begin{document}

\title[Iterated function systems with a given continuous stationary distribution]{Iterated function systems with a given continuous stationary distribution
}

\author{ \"{O}rjan Stenflo}
\email{stenflo@math.uu.se}
\subjclass[2000]{
Primary:  60J05, 
Secondary: 28A80, 37H99, 60F05,  65C05}
\keywords{ Iterated Function Systems, Markov Chain Monte Carlo}
 \address{Department of Mathematics,
Uppsala University,
751 06 Uppsala,
Sweden
}

\begin{abstract}  
For any continuous probability measure $\mu$ on ${\mathbb R}$  we construct an IFS with probabilities having $\mu$ as its unique  measure-attractor.
\end{abstract} 

\maketitle

\section{Introduction}

 In 1981 Hutchinson \cite{Hutchinson81} presented a theory  of fractals and measures supported on fractals based on iterations of functions. 

Let $\{ {\mathbb R}^d; f_i,p_i,\ i=1,...,n  \}$
be an iterated function system  with probabilities (IFSp). That is, $f_i: {\mathbb R}^d \rightarrow {\mathbb R}^d$, $i=1,...,n$, are functions and $p_i$ are associated non-negative numbers with $\sum_{i=1}^n p_i=1$.  If the maps
 $f_i: {\mathbb R}^d \rightarrow  {\mathbb R}^d $ are contractions, i.e.\  if there exists a constant $c<1$ such that $|f_i(x) - f_i(y)| \leq c |x-y|$, for all $x,y \in {\mathbb R}^d$, then there exists a unique nonempty compact set $A$ satisfying 
\begin{equation} \label{Attr}
A = \cup_{i=1}^n f_i(A)= 
\{ \lim_{k \rightarrow \infty } 
f_{i_1} \circ f_{i_2} \cdots \circ f_{i_k}(x);\ \ \ i_1 i_2 i_3...\in \{1,...,n\}^\mathbb{N} \},
\end{equation}
for any $x \in {\mathbb R}^d$, and a unique
  probability measure $\mu$, supported on $A$, satisfying the invariance equation
\begin{equation} \label{valborg}
\mu( \cdot ) = \sum_{i=1}^n p_i \mu( f_i^{-1} (\cdot)),
\end{equation}
see Hutchinson \cite{Hutchinson81}.
The set $A$ is sometimes called  the set-attractor, and $\mu$ the
  measure-attractor of the IFSp.

The set-attractor $A$ will have a  self-repeating ``fractal'' appearance 
if all maps $f_i$ are similitudes, and the sets, $f_i(A)$, $i=1,...,n$, do not overlap.  
This leads to the intuition to  regard the set-attractor $A$ in \eqref{Attr} as 
 being built up by $n$ (in general  overlapping and  
heavily distorted) ``copies'' of itself, and the measure-attractor as a ``greyscale colouring'' of the set-attractor.  
(Note that the probabilities $p_i$ play no role in the definition of $A$.)

In general we can not expect to have a unique set-attractor if the  IFS-maps are not assumed to be contractions or more generally if the limits
$\lim_{k \rightarrow \infty} f_{i_1} \circ f_{i_2} \cdots \circ f_{i_k}(x)$
do not exist, with the limit being independent of $x$, for {\em all}  $i_1 i_2 i_3...\in \{1,...,n\}^\mathbb{N}$, but unique measure-attractors exist if the limits
\begin{equation} \label{0630}
\widehat{Z}^F(i_1 i_2 ...):=
\lim_{k \rightarrow \infty} f_{i_1} \circ f_{i_2} \cdots \circ f_{i_k}(x)
\end{equation}
exist (with the limit being independent of $x$) for {\em almost all}  $i_1 i_2 i_3...\in \{1,...,n\}^\mathbb{N}$.
(Indeed, if the limit in \eqref{0630} exists a.s.\ then 
$\widehat{Z}^F$ may be regarded as a random variable, and its distribution
  $\mu(\cdot):=P(\widehat{Z}^F \in \cdot)$, is then the unique solution to
\eqref{valborg}.)

The theory of IFSp has a long pre-history within the theory of Markov
 chains, starting already with papers in the 30th by D\"oblin and others.
Let $\{X_k\}_{k=0}^\infty$ be the Markov chain obtained by random (independent)
 iterations with the functions, $f_i$, chosen with the 
corresponding probabilities, $p_i$. That is, let  $\{X_k\}$ be defined
 recursively by
$$
 X_{k+1}= f_{I_{k+1}}( X_k),\ k \geq 0,
$$
where $\{I_k\}_{k=1}^\infty$ is a sequence of independent random variables with $P(I_k=i)=p_i$, independent of $X_0$, where $X_0$ is some given random variable.
(It is well-know that any Markov chain  $\{X_k\}$ (with values in $
 \mathbb{R}^d$) can be expressed in the form  $X_{k+1}= g( X_k, Y_{k+1})$ where
$g: \mathbb{R}^d \times [0,1] \rightarrow  \mathbb{R}^d$ is a measurable function and  $\{Y_k\}_{k=1}^\infty$ is a sequence of independent random variables
uniformly distributed on the unit interval, see e.g.\  Kifer \cite{Kifer86}.)

If an IFSp has a unique measure-attractor, $\mu$, then $\mu$ is the 
 unique stationary distribution of $\{X_k\}$, i.e.\
 $\mu$ is the unique probability measure with the property that
 if $X_0$ is $\mu$-distributed,  then $\{X_k\}$ will be a (strictly)   stationary (and ergodic) stochastic process, see e.g.\ Elton \cite{Elton87}.
Therefore a unique measure-attractor can alternatively also be called a unique stationary distribution.

 Under standard average contraction conditions   it follows that
(\ref{0630}) holds a.s., and
the distribution of $X_k$ converges weakly to $\mu$ (with exponential rate quantified  e.g.\ by the Prokhorov metric for arbitrary distributions of the initial random variable $X_0$). Moreover the empirical distribution along  trajectories of $\{X_k\}$  converges weakly to $\mu $ a.s., 
and $\{X_k\}$   obeys a central limit theorem.
  See e.g.\ Barnsley et al.\ \cite{Barnsleyetal08}, Diaconis and Freedman \cite{DiaconisFreedman99}, and Stenflo \cite{Stenflo11}
 for details and further results. These papers also contains surveys of the
 literature.

\subsection{The inverse problem}

The inverse problem is to, given a probability measure $\mu$, find an IFSp having $\mu$ as its unique measure-attractor. This problem is of importance in e.g.\ image coding where the image, represented by a probability measure, can be encoded by the parameters in a corresponding IFSp in the affirmative cases, see e.g.\ Barnsley \cite{Barnsley93}. For an encoding to be practically 
useful it
 needs to involve few parameters and the
 distribution of $X_k$ needs to converge quickly 
to equilibrium  (a property ensured by average contractivity
properties of the functions in the IFSp) for arbitrary initial distributions of $X_0$.     


It is possible to construct solutions to the inverse problem  in some very 
 particular cases using Barnsley's  ``collage theorem'', see \cite{Barnsley93} containing exciting examples of e.g.\ ferns and clouds (interpreted as probability measures on $\mathbb{R}^2$) and their IFSp encodings, 
but typically it is very hard to even find approximate solutions to the inverse problem for general probability measures on ${\mathbb R}^d$.

In this paper we present a (strikingly simple) solution to
  the inverse problem for 
continuous probability measures on ${\mathbb R}$.

\section{Main result}

In order to present our solution to the inverse problem for 
continuous probability measures on ${\mathbb R}$,  recall the following basic facts
 used in the theory of random number generation; 
 
Let $\mu$ be a  probability measure on ${\mathbb R}$, and
 let $F(x)= \mu((-\infty,x])$ denote its distribution function.
The generalised inverse distribution function is defined by
$$F^{-1}(u)= \inf_{x \in {\mathbb R}} \{ F(x) \geq u \},\ \ \ 0  \leq u  \leq 1,$$ and satisfies
$F^{-1}(F(x)) \leq x$ and $ F(F^{-1}(u)) \geq u$ and therefore
$$ F^{-1}(u) \leq x \hspace{5mm} \text{ if and only if } \hspace{5mm}
u \leq F(x).$$
From this it follows that if $U \in U(0,1)$, i.e.\ if $U$ is a random variable
 uniformly distributed on the unit interval, then  $F^{-1}(U)$ is a $\mu$-distributed random variable. 
This basic property reduces the problem of simulating from an arbitrary distribution on ${\mathbb R}$, to the problem of simulating uniform random numbers on the unit interval.
 
We say that $\mu$ is continuous if $F$ is continuous.
Note that $\mu(\{x\})=0$ for any $x \in {\mathbb R}$ for continuous probability measures in contrast with discrete probability measures where $\sum_{x \in S} \mu(\{x\})=1$ for some countable set $S$.

If $\mu$ is continuous  then $ F(F^{-1}(u)) = u$, for $0 <u<1$.
This property is crucial for the following theorem;

\begin{theorem} \label{110622} 
A continuous distribution, $\mu$, on $\mathbb{R}$ with
 distribution function, $F$, is the measure-attractor of the 
IFS with monotone  maps   
 $f_i(x):= F^{-1} \circ u_i \circ  F(x) $, for any $x$ with $F(x) >0$, 
 and probabilities $p_i=1/n$, where
  $u_i(u)=u/n + (i-1)/n$, $0 \leq u \leq 1$,  $i=1,2,...,n$, for any $n \geq 2$.
\end{theorem}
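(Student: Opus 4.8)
The plan is to conjugate the given system, via the coordinate change $F$, to the genuinely contractive ``base-$n$ digit'' IFS $\{[0,1]; u_i, 1/n,\ i=1,\dots,n\}$ on the unit interval, where every required property is classical, and then read the conclusion back through $F^{-1}$. A direct appeal to Hutchinson's theorem for the $f_i$'s is not available, since $F^{-1}$ need not be Lipschitz (it jumps wherever $F$ is flat); what rescues the argument is that $F^{-1}$, being monotone, is continuous off a countable set, while the relevant limiting ``digit'' variable lands in that set with probability zero.

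\emph{The auxiliary system.} Each $u_i$ is an affine contraction of ratio $1/n$ mapping $[0,1]$ onto $[(i-1)/n, i/n]$, so by Hutchinson's theorem $\{[0,1]; u_i, 1/n\}$ has $[0,1]$ as its set-attractor and a unique measure-attractor; a one-line computation using that the intervals $u_i([0,1])$ tile $[0,1]$ shows that Lebesgue measure $\lambda$ on $[0,1]$ satisfies \eqref{valborg} for this system, hence is that measure-attractor. Since $u_{i_1}\circ\cdots\circ u_{i_k}(t)=\sum_{j=1}^{k}(i_j-1)n^{-j}+t\,n^{-k}$, the limit $Z(i_1i_2\dots):=\lim_{k\to\infty}u_{i_1}\circ\cdots\circ u_{i_k}(t)=\sum_{j=1}^{\infty}(i_j-1)n^{-j}$ exists for every $t\in[0,1]$ and is independent of $t$; by the characterisation \eqref{0630} (applicable here as the $u_i$ are contractions) this limit, evaluated at i.i.d.\ uniform indices $I_1,I_2,\dots$ on $\{1,\dots,n\}$, has law $\lambda=U(0,1)$.

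\emph{Conjugation.} Let $D=\{x\in\mathbb{R} : F(x)>0\}$; this is the natural domain of the $f_i$ (note $F^{-1}(0)=-\infty$), it is nonempty since $F(x)\to1$, and I will check that it is $f_i$-invariant and that $F\circ f_i = u_i\circ F$ on $D$. This is exactly where continuity of $\mu$ enters: for $x\in D$ one has $u_i(F(x))\in(0,1]$, and $F(F^{-1}(w))=w$ for every such $w$ --- on $(0,1)$ this is the stated identity, and the endpoint $w=1$ (which forces $F(x)=1$) follows directly from continuity of $F$. Combining this with the definition $f_i=F^{-1}\circ u_i\circ F$ and iterating gives, for every $x\in D$,
\begin{equation*}
f_{i_1}\circ\cdots\circ f_{i_k}(x)=F^{-1}\bigl(u_{i_1}\circ\cdots\circ u_{i_k}(F(x))\bigr).
\end{equation*}

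\emph{Passing to the limit.} By the auxiliary system the inner argument tends to $Z(i_1i_2\dots)$ as $k\to\infty$, uniformly in $x\in D$. The monotone function $F^{-1}$ is continuous outside a countable set $N\subset[0,1]$, and since $Z(I_1I_2\dots)\sim U(0,1)$ is non-atomic, $P(Z(I_1I_2\dots)\in N)=0$; hence for almost every $i_1i_2\dots\in\{1,\dots,n\}^{\mathbb{N}}$ the map $F^{-1}$ is continuous at $Z(i_1i_2\dots)$, so $\widehat Z^F(i_1i_2\dots):=\lim_{k\to\infty}f_{i_1}\circ\cdots\circ f_{i_k}(x)=F^{-1}(Z(i_1i_2\dots))$ exists independently of $x\in D$. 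By the discussion around \eqref{0630}, the IFSp $\{\mathbb{R}; f_i, 1/n\}$ then has a unique measure-attractor, namely the law of $\widehat Z^F=F^{-1}(Z(I_1I_2\dots))$, which is $\mu$ by the random-number-generation fact recalled before the theorem. I expect the main points to watch to be getting the conjugacy $F\circ f_i=u_i\circ F$ right at the boundary values $F(x)\in\{0,1\}$, and noticing that only almost-everywhere existence of the limit is obtained --- which is precisely what the criterion in \eqref{0630} requires.
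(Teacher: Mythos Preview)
Your proof is correct and follows essentially the same route as the paper: conjugate via $F$ to the base-$n$ IFS on $[0,1]$, use the explicit base-$n$ expansion to get a.s.\ convergence of the reversed iterates to a $U(0,1)$ variable, and pass back through $F^{-1}$ using that a monotone function has at most countably many discontinuities. You are somewhat more careful than the paper about the domain $D$ and the boundary value $w=1$, but the structure and key ideas coincide.
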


\begin{proof}
The Markov chain generated by  
$u_i(x)=x/n + (i-1)/n$, $i=1,2,...,n$, chosen
 with equal probabilities has the uniform distribution on the unit interval as its  unique stationary distribution.
 That is, if $\{I_k\}_{k \geq 1}$ is a sequence of independent random variables, uniformly distributed on $\{1,2,...,n\}$, then
\begin{equation}
Z_k^U(x)=u_{I_k} \circ \cdots \circ  u_{I_1} (x),\ \  \ 
Z_0^U(x)=x
\end{equation} 
 is a Markov chain starting at $x \in [0,1]$ having the uniform distribution as its unique stationary distribution.
 This can be seen by  observing that $Z_k^U(x)$ has the same distribution as the reversed iterates
 \begin{equation} \label{reversed}
\widehat{Z}_k^U(x)=u_{I_1} \circ \cdots \circ  u_{I_k} (x),\ \  \ 
\widehat{Z}_0^U(x)=x,
\end{equation}
 for any fixed $k $, and the reversed iterates $\widehat{Z}_k^U(x)$ converges almost surely to the $U(0,1)$-distributed random variable, $\widehat{Z}^U$,
 where the $k:th$ digit in the base $n$ expansion of $\widehat{Z}^U$ is given by $I_k-1$. 
 
If $\widehat{Z}^F$ denotes the limit of the reversed iterates of the system with $f_i$  chosen with probability $1/n$, then
\begin{eqnarray} \label{0826}
\widehat{Z}^F&:=&
 \lim_{k \rightarrow \infty} \widehat{Z}_k^F(x):= 
 \lim_{k \rightarrow \infty} 
 f_{I_1}   \circ \cdots \circ   f_{I_k}   (x)
 \nonumber \\
&=& 
 \lim_{k \rightarrow \infty}
 F^{-1} \circ u_{I_1} \circ  F 
 \circ
 F^{-1} \circ u_{I_2} \circ  F 
 \circ
 F^{-1} \circ u_{I_k} \circ  F (x) \nonumber \\
 &=& 
 \lim_{k \rightarrow \infty} F^{-1} \widehat{Z}_k^U ( F(x))= F^{-1} ( \widehat{Z}^U) \ \ \ a.s.,
 \end{eqnarray}
where the last equality holds since $F^{-1}(x) $ 
is non-decreasing, and since a
monotone function can have at most a countable set of discontinuity points in its domain, it follows that $F^{-1}(x) $ 
is continuous for a.a. $x \in [0,1]$ w.r.t. to the Lebesgue measure.

From the above it follows that
$$ P( \widehat{Z}^F \leq y ) = P( F^{-1} (\widehat{Z}^U)  \leq y)=P( \widehat{Z}^U 
\leq F(y))=F(y).$$ 
\end{proof}

\begin{remark}
If $X$ is a continuous  $\mu$-distributed random variable, then $ F(F^{-1}(u)) = u$, so $F(X) \in U(0,1)$. This contrasts the case when $X$ is discrete where $F(X)$ will also be discrete, so we cannot expect Theorem \ref{110622} to generalise to discrete distributions.

If an IFS $\{ {\mathbb R}, f_i, p_i, i=1,...,n \}$, has a continuous measure-attractor $\mu$ being the distribution of the a.s.\ limit of the reversed iterates, and the 
distribution function $F$ of $\mu$ satisfies $F^{-1} (F(x))=x$, for any $x \in {\mathbb R}$, with $0<F(x)<1$, then, similarly, the IFS
$\{ {[0,1]}, u_i, p_i, i=1,...,n \}$, with
$u_i(u):= F \circ f_i \circ  F^{-1}(u)$, $0 <u <1$, has the $U(0,1)$-distribution as its unique stationary distribution.
This is the case for absolutely continuous probability distributions $\mu$ if $F$ is strictly increasing.

\end{remark}

\begin{remark}
From Theorem \ref{110622} it follows that any continuous probability distribution on ${\mathbb R}$  can be approximated by the empirical distribution of 
 a Markov chain $\{ X_k \}$  on ${\mathbb R}$ generated by an IFSp 
 with trivial "randomness" generated by e.g.\ by a coin or a dice.

\end{remark}

\begin{remark}
 Theorem \ref{110622} may be used to represent a continuous  probability measure $\mu$ on ${\mathbb R}$ by the functions  suggested  in the theorem.
Note that there  exist many iterated function systems with probabilities generating the same Markov chain, see e.g.\ Stenflo \cite{Stenflo01}, so
 in particular it follows than an IFSp representation of a continuous probability measure on ${\mathbb R}$ is not unique. 
 The given IFSp representation suggested by Theorem \ref{110622} (for a given $n \geq 2$)  is good in the sense that the generated Markov chain converges quickly to the given equilibrium making it possible to quickly simulate it. 
If the suggested IFSp representation cannot be  described in terms of
 few parameters then it might make sense to consider 
 an approximate representation by approximating the IFS functions
 with functions described by few parameters  e.g.\ by using Taylor expansions.

\end{remark}

\begin{remark}
From Theorem \ref{110622} it follows
 that if $\mu$ is a continuous probability measure on ${\mathbb R} $ being the measure-attractor of 
$\{ {\mathbb R}; f_i,p_i,\ i=1,...,n  \}$, with $p_i \neq 1/n$ for some $n$,
then there exists another IFSp with uniform probabilities having $\mu$ as its measure-attractor.

\end{remark}

\begin{Exe}

Suppose $F$ is a distribution function satisfying
$$F(1-x)=1-F(x),$$
and
$$F(x)/2= F(a x+b), {\text{ for all }} 0 \leq x \leq 1,$$ 
where $0 \leq b \leq 1/2$, $0 \leq a+b \leq 1/2$, and $a \neq 0$.

Then
$$F(x)/2+1/2 =  1- F(1-x)/2=
1- F( a(1-x)+b)=F(a x+ 1-a-b).$$ 

Thus random iterations with the maps
$f_1(x)=  ax +b$, and  $f_2(x)=ax + 1-a-b$ chosen with equal probabilities generates a Markov chain with stationary distribution $\mu$ having distribution function $F$.

The case $a=1/3$ and $b=0$  corresponds to $F$ being the distribution function of the uniform probability measure on the  middle-third Cantor set (the Devil's staircase).

\begin{center} 
\resizebox{!}{40mm}{\includegraphics{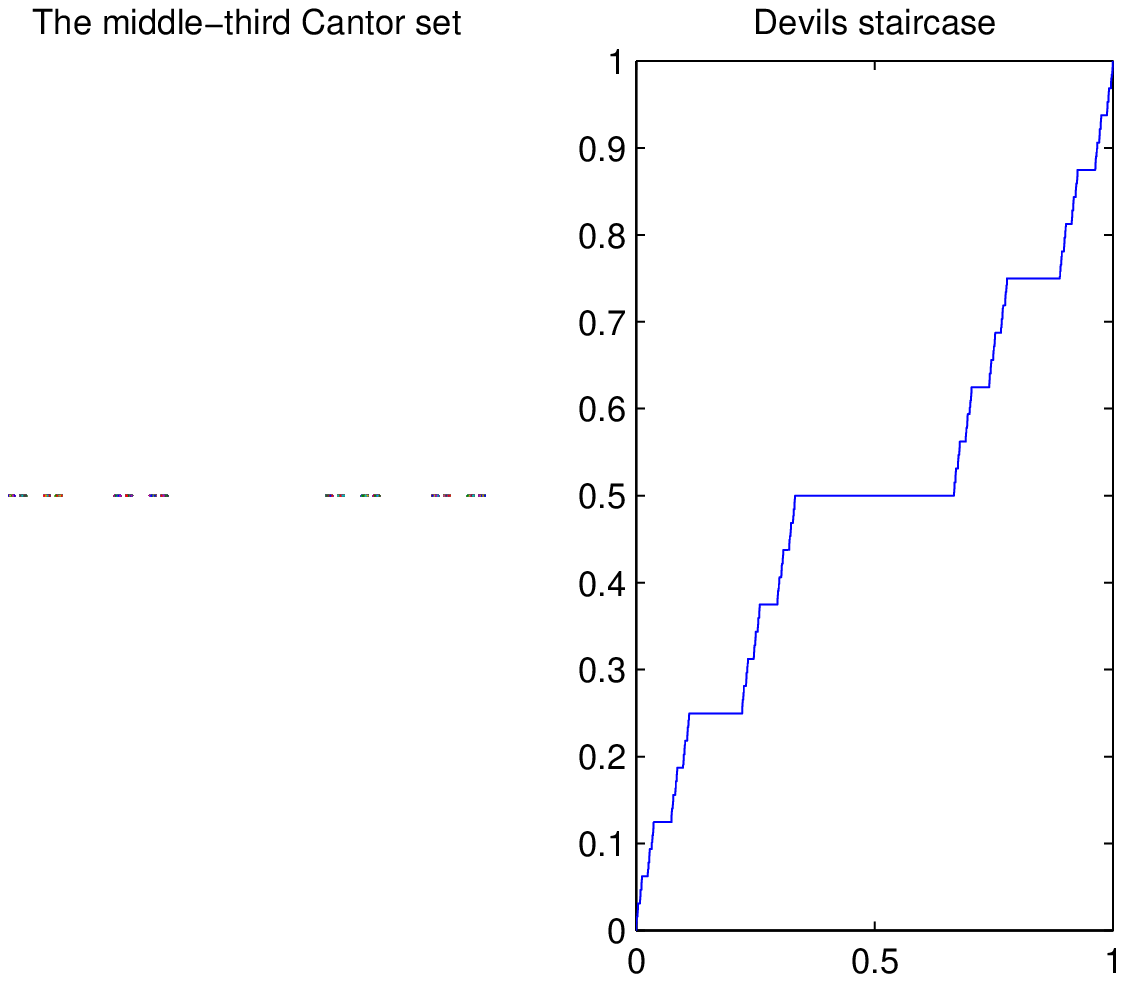}} 
\end{center}
\hspace{23mm}\parbox{12cm}{{\em The Cantor set
is the set-attractor of the  IFSp \linebreak[4] $\{ {\mathbb R}; f_1(x)=x/3, f_2(x)=x/3+2/3, p_1=1/2, p_2=1/2 \}$ 
and the distribution function of its measure-attractor (the uniform distribution on the Cantor set)  is an  increasing continuous function with zero derivative almost everywhere, with $F(0)=0$ and $F(1)=1$  popularly known as the ``Devil's staircase''. 
  }}

 \end{Exe}

\begin{Exe} Let $\mu$ be the probability measure 
with triangular density function
$$ d(x) = \begin{cases}
x & 0 \leq x \leq 1 \\
2-x & 1 \leq x \leq 2
  \end{cases}. $$
Then $\mu$ is the unique stationary distribution of the Markov chain generated by random iteration with the functions 
$$ f_1(x) = \begin{cases}
\frac{x}{\sqrt{2}} & 0 \leq x \leq 1 \\
\sqrt{ 2x- \frac{x^2}{2}-1 } & 1 \leq x \leq 2,
\end{cases} $$
and
$$ f_2(x) = \begin{cases}
2-\sqrt{ 1- \frac{x^2}{2} } & 0 \leq x \leq 1 \\
2- \sqrt{ 2 - 2x + \frac{x^2}{2} } & 1 \leq x \leq 2,
\end{cases}, $$
chosen uniformly at random.
\begin{center} 
\resizebox{!}{60mm}{\includegraphics{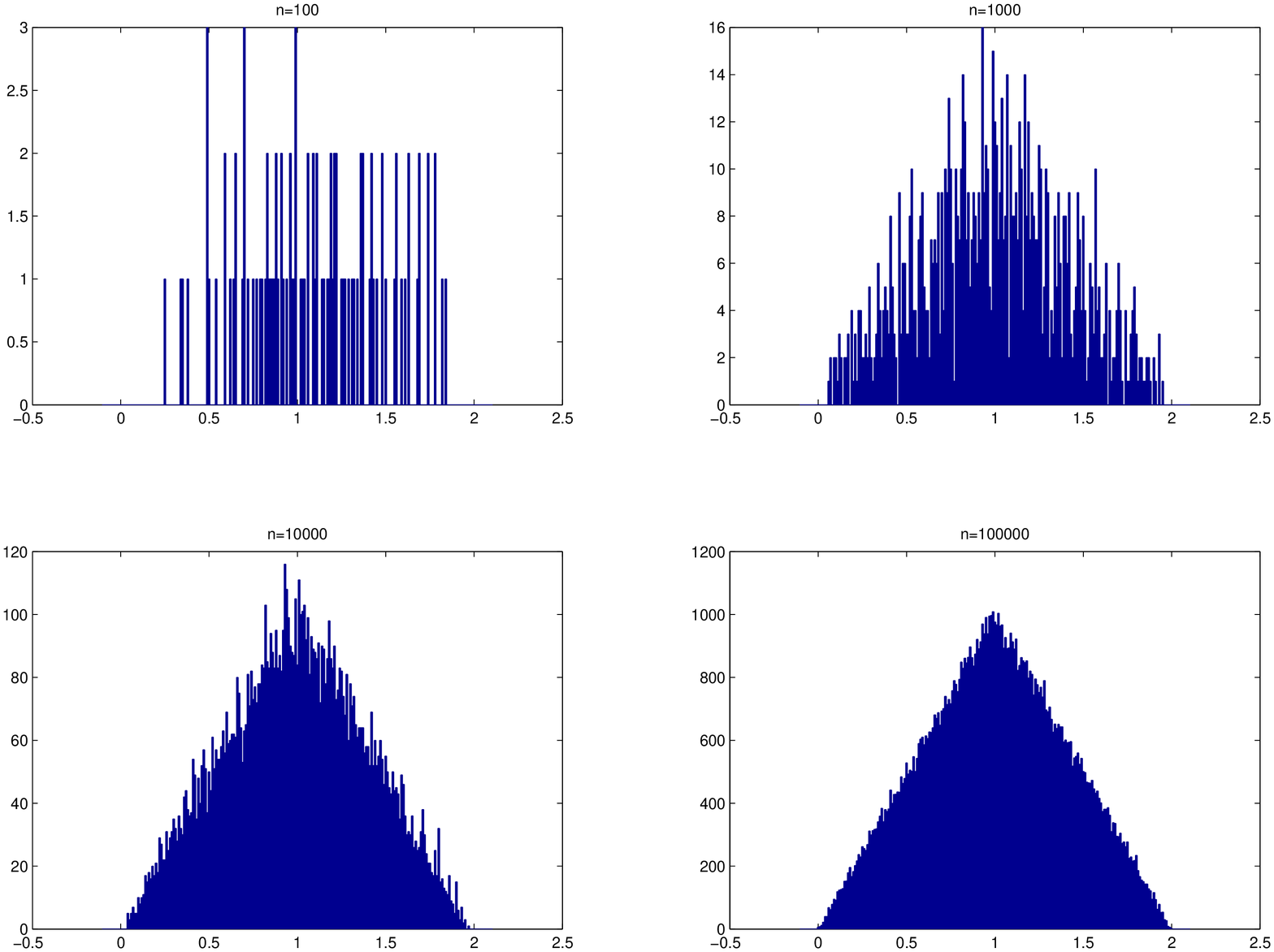}} 
\end{center}
\hspace{23mm}\parbox{12cm}{{\em  Histograms of the first $n$  points in a simulated random trajectory of the Markov chain. The empirical distribution along a trajectory converges weakly to the stationary triangular-distribution with probability one. 
  }}

\end{Exe}

\begin{Exe}
The distribution function for the exponential distribution with
 expected value $\mu= \lambda^{-1}$, $\lambda >0$, satisfies
$F(x)=1-e^{- \lambda x}$, $x \geq 0$.
 A  Markov chain generated by  
random iterations with the two maps $f_1=f_1^\mu$ and $f_2=f_2^\mu$
 defined as in 
Theorem \ref{110622} has the exponential distribution with expected value $\mu$ as its stationary distribution.
We can construct interesting ``new'' distributions by altering such Markov chains in various ways, e.g.\ by altering the application of two IFSs corresponding to different parameter values. 
A result of such a construction is shown in the figure.

\begin{center} 
\resizebox{!}{60mm}{\includegraphics{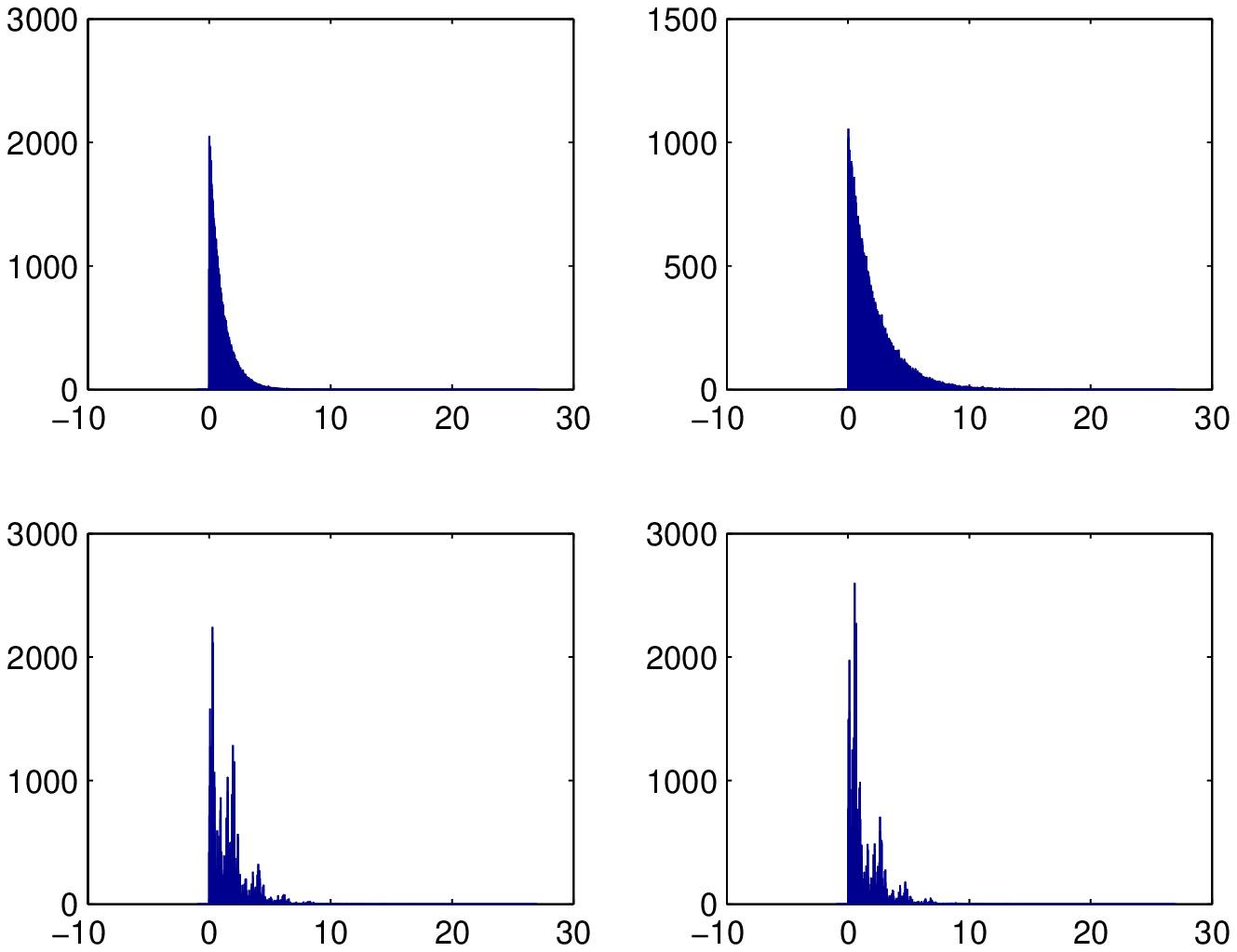}} \label{Exponential}
\end{center}
\hspace{23mm}\parbox{12cm}{{\em  
 
 }}
 
The upper figures are histograms of the first 200000 points
 in simulations of a trajectory of a Markov chain generated by  
random iterations with the two maps $f_1=f_1^\mu$ and $f_2=f_2^\mu$
 defined as in 
Theorem \ref{110622} 
 corresponding to the choices $\mu=1$ in the left hand figure and 
 $\mu=2$ in the righthand figure respectively.

The lower figures are histograms corresponding to trajectories of  Markov chains formed by random iterations with the maps
 $g_1(x)=
 f_1^2 (f_1^1(x))$,  $g_2(x)=
 f_1^2 (f_2^1(x))$,  $g_3(x)=
 f_2^2 (f_1^1(x))$,  $g_4(x)=
 f_2^2 (f_2^1(x))$ and  $h_1(x)=
 f_1^1 (f_1^2(x))$,  $h_2(x)=
 f_1^1 (f_2^2(x))$,  $h_3(x)=
 f_2^1 (f_1^2(x))$,  $h_4(x)=
 f_2^1 (f_2^2(x))$ 
respectively, where in both cases  the functions are chosen uniformly at random. 
\end{Exe}

\begin{remark}
The distributions constructed in the lower figures in the example above are 
$1-$variable mixtures of the exponential distributions with expected values  $\mu=1$, and  $\mu=2$ respectively.
We can, more generally,  for any integer $V \geq 1$, generate $V-$variable mixtures between continuous distributions.
See Barnsley et al. \cite{Barnsleyetal05} and \cite{Barnsleyetal08} for more on the theory of  
   $V-$variable  sets and measures.
\end{remark}

\end{document}